\newtheorem{thm}{Theorem}[section]
\newtheorem{lem}[thm]{Lemma}
\theoremstyle{definition}
\newtheorem{defn}[thm]{Definition}
\theoremstyle{remark}
\newtheorem{rem}[thm]{Remark}
\let\c@equation\c@thm
\numberwithin{equation}{section}
\newcommand*{\img}[1]{%
    \raisebox{-.3\baselineskip}{%
        \includegraphics[
        height=\baselineskip,
        width=\baselineskip,
        keepaspectratio,
        ]{#1}%
    }%
}
\newcommand*\circled[1]{\tikz[baseline=(char.base)]{
    \node[shape=circle,draw,inner sep=2pt] (char) {#1};}}
\tikzset{commutative diagrams/.cd,
mysymbol/.style={start anchor=center,end anchor=center,draw=none}
}
\newcommand{\C}{\mathbb{C}}
\newcommand{\R}{\mathbb{R}}
\renewcommand{\O}{\mathcal{O}}
\renewcommand{\P}{\mathcal{P}}
\newcommand{\Pd}{\mathscr{P}_d}
\newcommand{\I}{\mathcal{I}}
\newcommand{\U}{\mathcal{U}}
\newcommand{\Bd}{\operatorname{Bd}}
\newcommand{\ZZ}{\mathbb{Z} / 2\mathbb{Z}}
\newcommand{\ddx}{\frac{\mathrm{d}}{\mathrm{d} \, x}}
\newcommand{\CmD}{\C^d \setminus \Delta}
\newcommand{\PmS}{\Pd \setminus \Sigma}
\newcommand{\Id}{\mathrm{Id}}
\newcommand{\inc}{\subset}
\newcommand{\tto}{\rightarrow}
\renewcommand{\Re}{\mathrm{Re}}
\title{The Cup Length in Cohomology as a Bound on topological complexity}
\author{Parth Sarin}
\date{Summer 2017}
\begin{document}

\begin{abstract}

Polynomial solving algorithms are essential to applied mathematics and the sciences. As such, reduction of their complexity has become an incredibly important field of topological research. We present a topological approach to constructing a lower bound for the complexity of a polynomial-solving algorithm, and give a concrete algorithm to do this in the case that $\deg(f) = 2,3,4$.

\end{abstract}

\maketitle

\tableofcontents

\section{Defining the problem}

When computers execute programs to solve polynomials, it is inevitable that such programs include ``if'' statements. These decisions create branching in the algorithm. We would like to bound the amount of branching in algorithms. Formally, the problem at hand is:

\begin{adjustwidth*}{2.5em}{0pt}
	Poly($d$): Given a complex monic polynomial of degree $d$, find the roots of $f$ within $\epsilon$.
\end{adjustwidth*}

Given our assumptions, we can model any polynomial-solving algorithm with a tree. (For the present paper, we exclude any programs with loops in them.) More formally, we adopt the convention of Manna's Flowchart Programs \cite{Manna}.

\begin{defn}
	An \textbf{algorithm} is a rooted tree with a root at the top representing the input and leaves at the bottom representing the output. There are two types of internal nodes:
	
	\textit{Computation nodes}, \img{comp-node}, which output their input modified by a rational function.
	
	\textit{Decision nodes}, \img{branch-node}, at which the program either goes left or right depending on whether some inequality is true or false.
\end{defn}

We call these algorithms a ``computation tree.''

\begin{defn}
	The \textbf{topological complexity} of an algorithm is the number of decision nodes (intuitively, topologically speaking, computation nodes don't affect the structure of the tree.)
\end{defn}

The main theorem of Smale \cite{Smale}, which we will prove in this paper is:

\begin{thm} [Smale] \label{main-thm}
	For all $0 < \epsilon < \epsilon(d)$, the topological complexity of Poly($d$) is (strictly) greater than $(\log_2 (d))^{2/3} - 1$.
\end{thm}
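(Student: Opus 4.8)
The plan is to follow Smale's original topological strategy, which bounds the topological complexity of an algorithm below by (a logarithm of) a topological invariant of the "solving variety" the algorithm must navigate. Let me sketch the three main ingredients.

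First I would set up the geometry. An algorithm solving $\mathrm{Poly}(d)$ to within $\epsilon$ defines, on the open set $\U$ of monic degree-$d$ polynomials with well-separated roots (those for which "within $\epsilon$" unambiguously picks out $d$ approximate roots), a continuous map $\U \to \C^d$ assigning to $f$ an approximate root tuple; composing appropriately, one gets that the algorithm yields, on a suitable subset, a map that is a section up to homotopy of the $d!$-fold covering $\CmD \to \PmS$ (ordered roots mapping to the polynomial). The key structural fact is that a computation tree with $k$ decision nodes partitions its domain into at most $2^k$ pieces, on each of which the output is given by a \emph{single} rational (hence continuous, even algebraic) function; the root map is therefore "algebraic on each of $2^k$ pieces," and one analyzes how such a piecewise structure can or cannot produce a global section of the covering. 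This is where the cup length enters: if a space $X$ has cohomology classes $u_1,\dots,u_m$ with nonzero product but each $u_i$ restricting to zero on the pieces, then one needs enough pieces — quantitatively, the number of pieces must exceed the cup length, which after taking $\log_2$ gives a bound on $k$.

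Second, I would compute (or cite from the earlier development) the relevant cohomological cup length. The space $\PmS$ of polynomials with distinct roots is a $K(\pi,1)$ for the pure-or-full braid group, and the covering $\CmD \to \PmS$ is classified by a map to $B\Sigma_d$; the obstruction to sectioning lives in the cohomology of the braid group. Smale's estimate uses that $\CmD$ (the configuration space of $d$ ordered points in $\C$) has cohomology generated in degree one by classes $\omega_{ij}$ with a large collection of nonzero products, yielding a cup length on the order of $d-1$, and more refined counting in the unordered case produces a quantity growing roughly like a power of $\log d$ after the combinatorics of the symmetric-group action are accounted for. The precise shape $(\log_2 d)^{2/3}$ comes from balancing the number of algebraic pieces ($2^k$) against a cohomological lower bound that itself grows polynomially in $\log_2 d$ — this exponent $2/3$ is the signature of Smale's original argument and I would reproduce that balancing computation.

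Third I would assemble the bound: choose $\epsilon(d)$ small enough that "$\epsilon$-approximation" genuinely separates roots on a set carrying the needed cohomology, invoke the piecewise-algebraic structure to conclude $2^{k+1} > (\text{cup length}) \geq c\,(\log_2 d)^{2/3}$ for a suitable constant absorbed into the estimate, and take logarithms to obtain $k > (\log_2 d)^{2/3} - 1$. I expect the main obstacle to be the second step: extracting a cup-length lower bound of exactly the order $(\log_2 d)^{2/3}$ from the cohomology of the symmetric group / braid group is genuinely delicate — the ordered configuration space gives only a linear-in-$d$ bound in the wrong variable, and descending to the quotient while keeping nonvanishing products requires the careful combinatorial argument (counting partitions and controlling which monomials survive the transfer) that is the technical heart of Smale's paper. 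A secondary subtlety is making rigorous the passage from "algorithm" to "continuous piecewise-rational section," in particular handling the decision nodes' loci where inequalities degenerate, which requires a genericity/perturbation argument to keep everything within the good open set $\U$.
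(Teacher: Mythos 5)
Your overall strategy matches the paper's: reduce to the covering number of the $d!$-fold covering $\pi : \CmD \tto \PmS$, bound that below by a cup-length in the mod $2$ cohomology of the braid group space $\PmS$ (via classes that die on the pieces over which the algorithm admits continuous sections, i.e.\ the Lusternik--Schnirelmann-type argument of Lemma \ref{cup-ln-ker} combined with the vanishing of $\pi^\ast$ in positive degrees), and feed in Fuchs's computation. However, your final quantitative assembly contains a genuine error. The topological complexity is the \emph{number} $k$ of decision nodes, and a computation tree with $k$ binary decision nodes has exactly $k+1$ leaves --- not ``at most $2^k$ pieces,'' which is the bound for a tree of \emph{depth} $k$. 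The correct chain is: number of leaves $= k+1 \geq$ covering number $>$ cup length $\geq (\log_2 d)^{2/3}$, with no logarithm taken at the end. Your proposed inequality $2^{k+1} > (\log_2 d)^{2/3}$ followed by ``take logarithms'' yields only $k \gtrsim \tfrac{2}{3}\log_2\log_2 d$, a doubly-logarithmic bound that does not imply the stated theorem; the conclusion $k > (\log_2 d)^{2/3} - 1$ simply does not follow from the inequalities you wrote down.

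A second, related misconception: you attribute the exponent $2/3$ to ``balancing the number of algebraic pieces ($2^k$) against a cohomological lower bound polynomial in $\log_2 d$.'' In fact no balancing occurs. The exponent comes entirely from Fuchs's description of $H^{>0}(\PmS, \ZZ)$: generators $g_{m,k}$ of degree $2^{k+m-1}$ with $g_{m,k}^2 = 0$ and products vanishing only when $2^{\sum m_i + \sum k_i} > d$, so one counts how many distinct pairs $(m_i,k_i)$ can be chosen with $\sum (m_i + k_i) \leq \log_2 d$; since the pairs with $m+k \leq N$ number about $N^2/2$ and their total weight is about $N^3/6$, one can take roughly $(\log_2 d)^{2/3}$ factors with nonzero product (Lemma \ref{compute-cup-ln}). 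Your alternative sketch via the degree-one classes $\omega_{ij}$ of the ordered configuration space and a transfer argument is also off-target for this bound: those classes live upstairs on $\CmD$, whereas the argument needs classes on $\PmS$ lying in $\ker(\pi^\ast)$, which is why the paper instead invokes Fuchs's surjectivity of $H^\ast(BO(d),\ZZ) \tto H^\ast(\PmS,\ZZ)$ and the triviality of $\pi_1(\CmD) \tto \mathfrak{S}_d$ to show $\pi^\ast$ kills all of $H^{>0}$ (Lemma \ref{compute-ker-pi}). With the leaf count corrected to $k+1$ and the cup-length input taken directly from Fuchs, your outline collapses to the paper's proof; as written, the concluding estimate is invalid.
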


\section{Algorithmic processes and nodes}

Essentially, an algorithm takes input of a certain type (from a specific space, in our case a subset of the complex monic polynomials), then does computation, and stores intermediary variables in ``memory'' (also represented as a specific space), and then returns output (another space, in our case, a $d$-tuple of complex numbers).

It is important to formalize this notion. The input domain is denoted $\I$, the output domain is denoted $\O$, and the program (computation) domain is denoted $\P$.

There are four types of internal nodes in an algorithm: start, compute, decision, and end.

\vspace{3mm}

\begin{adjustwidth*}{0em}{2.5em}
	Start nodes are defined by a rational map, $f : \I \tto \P$ and perform the first computation on the input, $x$:
	
	\begin{center}
	\includegraphics[scale=0.3]{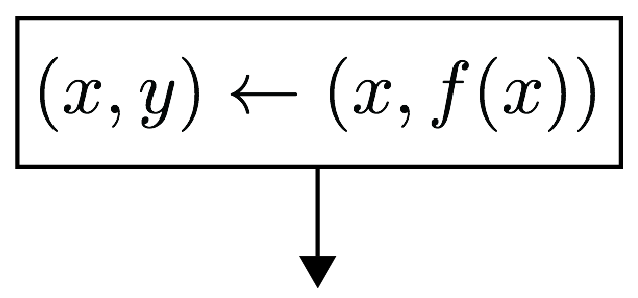}
	\end{center}
	
	\noindent
	Computation nodes are defined by a rational map, $g: \I \times \P \tto \P$:
	
	\begin{center}
	\includegraphics[scale=0.3]{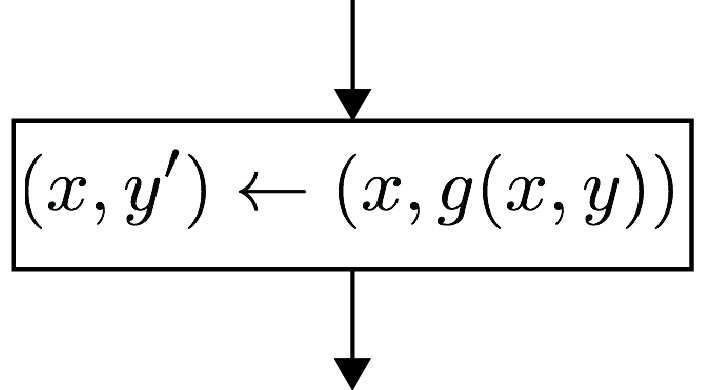}
	\end{center}
	
	\noindent
	Each branch (decision node) is a rational map, $h : \I \times \P \tto \P$, and upon encountering the branch, the computer makes a decision based on an inequality (could be $<$ or $\leq$):
	
	\begin{center}
		\includegraphics[scale=0.3]{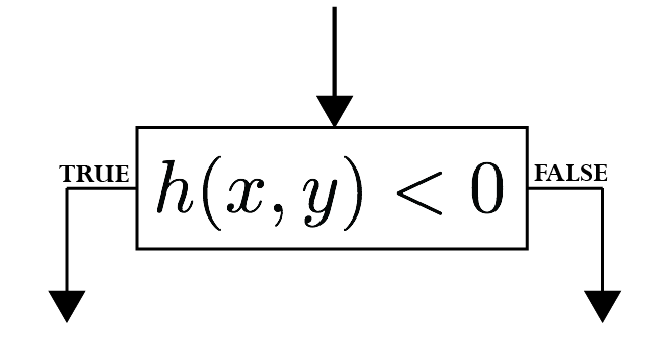}
	\end{center}
	
	\noindent
	Finally, end nodes are defined by a rational map, $l : \I \times \P \tto \O$, and perform the final computation:
	
	\begin{center}
		\includegraphics[scale=0.3]{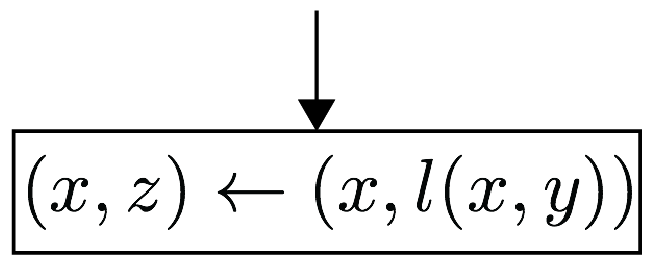}
	\end{center}
\end{adjustwidth*}

\vspace{3mm}

Each input, $x \in \I$ defines a specific path down the tree.

\section{From computation theory to algebraic topology}

This section is based on a spectacular result by Smale \cite{Smale}, which establishes a connection between computation theory and algorithmic complexity and the covering space theory, so the rest of the proof can proceed by algebraic topology.

The program will not use all the inputs in $\I$ (in our case, we want to only consider polynomials with distinct roots). The set of \textit{usable inputs} is a semialgebraic set $Y \inc \I$. Similarly, the set of \textit{usable outputs} is the set of potential outputs, when the program is given elements of $Y$. Formally, the algorithm returns an element of $Y \times \O$, so the usable outputs is the set $X \inc Y \times \O$ such that $X \tto Y$ is the restriction of the projection $Y \times \O \tto Y$ where the projection is required to be surjective. Additionally, we require that if $x \in Y$, then division by zero is never encountered in the algorithm.

Let $f : X \tto Y$ be continuous.

Now, we specify $\I$, $\P$, $\O$, $X$, and $Y$ for Poly($d$).

The input space, $\I$ is $\Pd \inc \C[t]$, the space of complex monic polynomials of degree $d$. $p \in \Pd$ can be written as $p = t^d + a_{d-1} t^{d-1} + \cdots + a_1 t + a_0$, so we can identify $\Pd$ with $\C^d$.

The program space is just $\C^n$ with $n$ sufficiently large.

The program is tasked with approximating the roots of a polynomial, so, naturally, the output space $\O$ is $\C^d$, and is populated by $d$-tuples of the space of polynomials.

Since restriction of the input space does not increase the complexity, we consider $Y \inc \I$ to be the collection of polynomials with distinct roots, $X \inc Y \times \O$ to be the corresponding output (roots of the polynomials of $Y$ to within $\epsilon$), and $f : X \tto Y$ to be the restriction of the projection map $Y \times \O \tto Y$.

Let the function $\pi : \C^d \tto \Pd$ be the function which maps roots $(\gamma_1, \cdots, \gamma_d)$ to the monic polynomial which vanishes on the $\gamma_i$. That is,
\[ \pi(\gamma_1, \cdots, \gamma_d) = (t-\gamma_1) \cdots (t-\gamma_d) \]
And the coefficients of $\pi$ in the natural basis of $\Pd$ are given by the elementary symmetric polynomials in the roots.

\begin{defn} \label{covering-def}
	The \textbf{covering number} of $f: X \tto Y$ is the smallest $k$ such that there exist open sets $\U_1, \cdots, \U_k \inc Y$ and continuous maps $g_i : \U_i \tto X$ such that $f \circ g = \Id_{\U_i}$
\end{defn}

\begin{rem} \label{cov-remark}
	$\pi$ is an $d!$-fold cover of $\Pd$ because $\pi(\gamma_1, \cdots, \gamma_d) = (\gamma_{\sigma(1)}, \cdots, \gamma_{\sigma(n)})$ where $\sigma \in \mathfrak{S}_d$, the symmetric group.
\end{rem}

We would like to only consider polynomials with distinct roots. Define
\[ \Delta := \{ (\gamma_1, \cdots, \gamma_d) : \exists \; i \not= j \text{ such that } \gamma_i = \gamma_j \} \]
Then, define $\Sigma := \pi(\Delta)$. $\Sigma$ consists of the polynomials with repeated roots, and is an algebraic variety (where the discriminant is zero).

Clearly $\pi$ restricts $\C^d \setminus \Delta \tto \Pd \setminus \Sigma$.

For the mechanics of the next theorem, we will need to change the usable inputs and outputs. We pick them very restrictively, and then inverse deformation retract onto $X$ and $Y$ as defined above. For now, the space of usable inputs will be defined as
\[ B_K := \{ f \in \Pd : |a_i| \leq K, i = 0, 1, \cdots, d-1, t^d + a_{d-1} t^{d-1} + \cdots + a_1 t + a_0 \} \]
with $K = K(d)$ sufficiently large so that if $f$ has all of its roots in the unit disk, then $f \in B_K$. Then, the set of acceptable outputs, $X$ is just the set of possible outputs from $B_K$ (the $d$-tuples which fall within $\epsilon$ of the roots of $f \in B_K$).

Now, we prove Smale's connection between computation theory and algebraic topology:
\begin{thm} [Smale] \label{thm-connection}
	The covering number of the restriction $\pi : \CmD \tto \PmS$ is less than or equal to the number of branches of Poly($d$) for all $\epsilon < \epsilon(d)$ described in the proof.
\end{thm}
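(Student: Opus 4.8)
The plan is to take an algorithm $\mathcal{A}$ solving Poly($d$) with $b$ decision nodes and extract from it a covering of $\pi : \CmD \to \PmS$ (in the sense of Definition 1.7) by at most $b$ pieces — giving covering number $\leq b$. The rough idea is that the algorithm, being a finite tree, partitions its input domain into finitely many pieces according to which leaf each input reaches; the decision nodes are exactly what cut the input space, so there are at most $b+1$ leaves, hence at most $b+1$ cells, and on each cell the algorithm is given by rational functions (the composites of the maps $f, g, h, l$ along that root-to-leaf path) and hence continuous. Each cell therefore supplies a continuous section of $\pi$ over its interior. A small bookkeeping argument (one cell can be absorbed, using surjectivity of $f : X \to Y$ on the deformation-retracted model) brings the count from $b+1$ down to $b$.

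The steps, in order, would be: (1) First I would set up the reduction of usable inputs/outputs: replace $X, Y$ by the restrictive models built on $B_K$, observe that $B_K \setminus \Sigma$ deformation retracts onto $\PmS$ (rescaling roots into the unit disk) and the corresponding output space retracts onto $\CmD$, so that a section over the big model pulls back to a section over $\CmD \to \PmS$; this is where the hypothesis $\epsilon < \epsilon(d)$ is consumed — $\epsilon$ must be small enough that an $\epsilon$-approximate root-tuple of a polynomial with distinct roots still has distinct entries and stays in the right component, so that rounding the algorithm's output to an exact root is a continuous, well-defined operation on each cell. (2) Next I would formalize the cell decomposition of $Y$ (equivalently of $B_K \setminus \Sigma$) induced by $\mathcal{A}$: for each leaf $\ell$, let $W_\ell \subseteq Y$ be the set of inputs routed to $\ell$; the defining conditions of $W_\ell$ are the conjunction of the strict/non-strict inequalities at the decision nodes along the path to $\ell$, so $W_\ell$ is semialgebraic and its interior $\U_\ell$ is open, and the $\U_\ell$ cover $Y$ up to a measure-zero/lower-dimensional set — which, since $\PmS$ is a manifold and the sections only need to be defined on an open cover, is handled by slightly enlarging each $\U_\ell$ or by noting the boundary inputs can be assigned to an adjacent cell. (3) On each $\U_\ell$, the output of $\mathcal{A}$ is the composite of the rational maps $f, g, \dots, l$ attached to the nodes on that path, hence a rational (so continuous, after checking no division by zero occurs for inputs in $Y$, which was assumed) map $\U_\ell \to \O$; post-composing with "round to the nearest exact root-tuple" and using step (1) gives a continuous $g_\ell : \U_\ell \to X$ with $\pi \circ g_\ell = \Id$ (here one identifies the restrictive-model section with a section of $\pi$ via the retraction). (4) Finally I would count: there are at most $b+1$ leaves, hence at most $b+1$ sets $\U_\ell$; to save the extra one, note that a tree with $b$ decision nodes in which every decision is "live" has exactly $b+1$ leaves, but two sibling leaves under the topmost decision together cover an open set on which a single section can be defined by hand (or invoke that the covering number only needs open sets, not a partition, so one $\U_\ell$ may be deleted and its section's domain absorbed into a neighbor using surjectivity of $f$). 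This yields covering number $\leq b$.

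The main obstacle I expect is step (1)/(3): making precise the passage from "the algorithm outputs some $d$-tuple within $\epsilon$ of the roots" to "we have an honest continuous section of the root-map $\pi$." The algorithm's output is only an approximation and a priori need not lie on the image of $\CmD$ under any section; one must show that for $\epsilon$ small (depending on $d$ and on how close to $\Sigma$ we allow inputs — which is why the restrictive model $B_K$, bounded away from the bad locus after retraction, is introduced) the $\epsilon$-ball around the true root-tuple lies in a single sheet, so that a canonical continuous "snapping" map to the true roots exists on each cell and is compatible across the construction. Getting the quantifiers right — $K = K(d)$ first, then $\epsilon = \epsilon(d)$ small relative to the geometry of $B_K \setminus \Sigma$ — is the delicate bookkeeping that the phrase "for all $\epsilon < \epsilon(d)$ described in the proof" is flagging. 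The combinatorial counting in step (4) and the semialgebraic cell decomposition in step (2) are routine by comparison.
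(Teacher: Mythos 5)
Your overall strategy matches the paper's: decompose $B_K$ by the leaves of the computation tree, use continuity of the algorithm's rational output on each cell, snap the $\epsilon$-approximate output to the true root tuple, and transport the resulting sections along a deformation retraction. But two steps have genuine gaps. The first is the snapping step. You claim the restrictive model built on $B_K$ is ``bounded away from the bad locus after retraction,'' so that $\epsilon$ need only be small relative to the geometry of $B_K \setminus \Sigma$. That is false: $B_K \setminus \Sigma$ contains polynomials with distinct but arbitrarily close roots, so no uniform $\epsilon(d)$ makes the ``nearest root'' assignment well defined on it, and retracting onto such a set does not help. The paper's proof needs an extra excision that your outline is missing: it triangulates $(\pi(S), \Sigma \cap \pi(S))$, removes a neighborhood $N$ of $\Sigma \cap \pi(S)$, and retracts $\PmS$ onto the compact set $\pi(S) \setminus N$, on which root separation is uniformly bounded below by some $\mu(d) > 0$; only then does fixing $\epsilon < \mu(d)/2$ make the snapping unambiguous and continuous. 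Moreover, one cannot simply ``pull back'' a section along the retraction (composing with the retraction is not a section of $\pi$); the paper sets $Q_i = h^{-1}(P_i, 1)$ and extends each $\psi_i$ over $Q_i$ using the covering homotopy property of the covering $\pi$. Relatedly, your boundary handling in step (2) (``enlarge the interiors'' or ``assign boundary inputs to an adjacent cell'') does not by itself yield sections defined on open sets that cover everything; the paper gets open domains by extending each $\varphi_i$ from the closed cell $V_i$ to a neighborhood $\U_i$ via the Tietze extension theorem.

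The second gap is your step (4), the attempt to shave the count from $b+1$ to $b$. There is no reason a single continuous section should exist over the union of two sibling leaf-cells --- that is precisely the situation a decision node exists to handle --- and deleting one $\U_\ell$ from the cover destroys the covering property, while surjectivity of $f$ produces no section. This step is also unnecessary: the paper's proof produces one open set per leaf, i.e.\ covering number at most the number of leaves, and it is exactly this count (leaves $=$ topological complexity $+1$) that is used in the proof of Theorem \ref{main-thm}.
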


\begin{proof}
Since any algorithm for Poly($d$) can be described as a computation tree, we can number its leaves $i = 1, \cdots, k$. Then, define $V_i \inc B_K$ as the set of polynomials which arrive at leaf $i$. So, since all polynomials from $B_K$ arrive at one of the leaves, and no polynomial arrives at multiple leaves, the following statements are true: $B_K = \cup_{i=1}^k V_i$ and $V_i \cap V_j = \emptyset$ if $i \not = j$.
	
Additionally, since the algorithm computes the roots for $f$, there is a very natural $\varphi_i : V_i \tto \C^d$ where $\varphi_i(f) = (z_1, \cdots, z_d)$ such that $|z_i - \gamma_i| < \epsilon$. $V_i$ is closed in $B_K$ (it is a semi-algebraic set). By the Tietze Extension Theorem, we can extend $\varphi_i : \U_i \tto B_K$ where $\varphi_i(f) = (z_1, \cdots, z_d)$ such that $|z_i - \gamma_i| < \epsilon$ still.

Now that we have a nice covering of $B_K$, we would like to ``blow this covering up'' so it covers $\PmS$. For this, we will need to inverse deformation retract from $B_K$ to the entire space.

\begin{defn} [Deformation retraction]
	A subspace $A \inc B$ is a deformation retraction of $B$ if there exists some homotopy $h : I \times B \tto B$ such that $h(0,\cdot) = \Id_B$ and $h(1,B) \inc A$ and $h(1, \cdot)$ restricted to $A$ is the identity on $A$. That is, $h(1,a) = a$ for all $a \in A$.
\end{defn}

We now need some Lemmas as technical tools:

\begin{lem}
	Let $A$ be a closed, compact subspace of a space $B$ such that ($B$,$A$) can be triangulated (it is homeomorphic to a simplicial complex and a subcomplex respectively). Then, there exists a neighborhood of $A$ such that $B \setminus N$ is a deformation retract of $A$.
\end{lem}

Then, let $S$ be the unit sphere in $\C^d$ under the standard Hermitian inner product.

\begin{lem}
	$(\pi(S), \Sigma \cap \pi(S))$ can be triangulated.
\end{lem}

\begin{lem}
	$\pi(S) \setminus \Sigma \cap \pi(S)$ is a deformation retract of $\PmS$.
\end{lem}

\begin{proof} [Proof of Lemma] We can just write down the deformation retraction. $h(t, x) = (1-t)x + t \frac{x}{\lVert x \rVert}$. (Note: technically $h$ acts on a vector in $\C^d$, but since it's invariant under the symmetric group, we can say that it acts on the polynomial space. That is, we can use $h$ to bring all of the roots of the polynomials into the unit sphere.)
\end{proof}

It follows that:
\begin{lem} \label{retract-poly-space}
	There exists a neighborhood, $N$, of $\Sigma \cap \pi(S)$ such that $\pi(S) \setminus N$ is a deformation retract of $\PmS$.
\end{lem}

As aforementioned, we're going to use this deformation retraction to expand our covering for $B_K$ to the covering for the entire space.

Let $h : \PmS \times I \tto \PmS$ be the deformation retraction from Lemma \ref{retract-poly-space}. That is, $h(\PmS, 1) \inc \pi(S) \setminus N$. We want to show that when we invert this retraction on the covering of $B_K$, we can uniquely determine a polynomial in $\PmS$ with the appropriate roots. So, choose $\mu(d)$ such that if $f \in \pi(S) \setminus N$, the roots of $f$ satisfy $|\gamma_i - \gamma_j| > \mu(d)$ for $i \not = j$.

Then, choose $P_i = \U_i \cap \pi(S) \setminus N$. Notice that the $P_i$ cover $\pi(S) \setminus N$.

Fix $\epsilon < \frac{\mu(d)}{2}$. This (slightly technical) work with roots and inequalities has been so we can uniquely determine the polynomial and nicely define the $g_i$ in Definition \ref{covering-def}. We can define $\psi_i(f)$ as follows (for $f \in P_i$):
\[ f \xrightarrow{\varphi_i(f)} (z_1, \cdots, z_d) \leadsto (\gamma_1, \cdots, \gamma_d) \]

Note that the determination of the roots above (denoted by $\leadsto$) is possible only because each $z_i$ has a closest root of $f$ defined unambiguously. Therefore, $\psi_i : P_i \tto \CmD$ is continuous and the $P_i$ cover $\pi(S) \setminus N$.

Finally, we define $Q_i := h^{-1} (P_i, 1)$ and using the covering homotopy property, we extend $\psi_i : Q_i \tto \CmD$.


\end{proof}

\begin{rem}
	Clearly, Theorem \ref{thm-connection} can be generalized for many other situations.
\end{rem}

\section{The cup length as a bound on the covering number}

We assume familiarity with the notions of cohomology and cohomology ring. We denote the cohomology ring of a topological space, $X$, as $H^\ast (X, R)$. We often omit $R$, and just write the cohomology ring as $H^\ast (X)$. In these cases, $R$ is irrelevant, and can just be taken to be $\mathbb{Z}$.

$H^\ast(X, R)$ comes equipped with a cup product. We write $\smile: H^\ast (X, R) \tto H^\ast(X, R)$ to be this map.

\begin{defn} [Cup length]
The \textbf{cup length} of a cohomology ring, $H^\ast (X; R)$ is the largest $k$ such that there exist $\eta_1, \cdots, \eta_k \in H^\ast (X; R)$ with $\eta_1 \smile \eta_2 \smile \cdots \smile \eta_k \not = 0$.
\end{defn}

In fact, we can bound the covering number of a map by the cup length of the kernel of the induced map on the cohomology.

\begin{lem} \label{cup-ln-ker}
	If $f : X \tto Y$ is continuous, then the covering number of $f$ is strictly greater than the cup length of $\ker(f^\ast)$ where $f^\ast : H^\ast(Y) \tto H^\ast(X)$ is the naturally induced map.	
\end{lem}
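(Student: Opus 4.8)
The plan is to set up a cover of $Y$ by open sets $\U_1, \dots, \U_k$ with sections $g_i : \U_i \tto X$ (so $f \circ g_i = \Id_{\U_i}$), where $k$ is the covering number of $f$, and to show that if $\eta_1, \dots, \eta_k \in \ker(f^\ast)$, then $\eta_1 \smile \cdots \smile \eta_k = 0$. The key observation is that each $g_i$ being a section means $g_i^\ast \circ f^\ast = \Id$ on $H^\ast(\U_i)$, so for any $\eta \in \ker(f^\ast)$ the restriction of $\eta$ to $\U_i$ is $g_i^\ast(f^\ast(\eta)) = 0$; that is, $\eta$ dies in $H^\ast(\U_i)$. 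Hence each $\eta_j$ comes from a relative class in $H^\ast(Y, \U_j)$ via the long exact sequence of the pair.

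Concretely, I would first pass from $\eta_j \in H^\ast(Y)$ with $\eta_j|_{\U_j} = 0$ to a lift $\bar\eta_j \in H^\ast(Y, \U_j)$ mapping to $\eta_j$ under $H^\ast(Y, \U_j) \tto H^\ast(Y)$, using exactness at $H^\ast(Y)$ in the sequence $\cdots \tto H^\ast(Y, \U_j) \tto H^\ast(Y) \tto H^\ast(\U_j) \tto \cdots$. Then I would invoke the relative cup product, which refines the absolute one to a map
\[ H^\ast(Y, A_1) \otimes \cdots \otimes H^\ast(Y, A_k) \tto H^\ast(Y, A_1 \cup \cdots \cup A_k), \]
compatible with the maps back down to $H^\ast(Y)$. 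Applying this to the $\bar\eta_j$ gives a class in $H^\ast(Y, \U_1 \cup \cdots \cup \U_k)$ whose image in $H^\ast(Y)$ is $\eta_1 \smile \cdots \smile \eta_k$. But $\U_1 \cup \cdots \cup \U_k = Y$ since the $\U_i$ cover $Y$, so $H^\ast(Y, Y) = 0$, forcing the product to vanish. This shows the cup length of $\ker(f^\ast)$ is at most $k - 1$, i.e., strictly less than the covering number.

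The main obstacle is making the relative cup product step rigorous: one needs that the sets $\U_i$ form a cover by open sets (which is exactly the hypothesis in Definition \ref{covering-def}), so that excision and the relative cup product on $(Y; \U_1, \dots, \U_k)$ behave correctly — in particular that $H^\ast(Y, \bigcup \U_i)$ is computed by the cover rather than requiring, say, a closed cover or a CW structure. I would either cite Hatcher's treatment of the relative cup product (where the relevant pairing $H^k(X, A) \times H^l(X, B) \tto H^{k+l}(X, A \cup B)$ is constructed for open $A, B$, inductively extended to finitely many opens) or, if one prefers to avoid that machinery, argue by a direct induction: $\eta_1 \smile \cdots \smile \eta_{k-1}$ restricts to $0$ on $\U_1 \cup \cdots \cup \U_{k-1}$ by an inductive relative-product argument, hence lifts to $H^\ast(Y, \U_1 \cup \cdots \cup \U_{k-1})$, and cupping with $\eta_k \in H^\ast(Y, \U_k)$ lands in $H^\ast(Y, Y) = 0$. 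Everything else — naturality of $f^\ast$, the section identity $g_i^\ast f^\ast = \Id$, and exactness of the pair sequence — is routine.
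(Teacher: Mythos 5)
Your proposal is correct and follows essentially the same route as the paper's proof: use the section identity to see that each $\eta_j \in \ker(f^\ast)$ restricts to zero on $\U_j$, lift it to $H^\ast(Y, \U_j)$ via the long exact sequence of the pair, and multiply with the relative cup product into $H^\ast(Y, \U_1 \cup \cdots \cup \U_k) = H^\ast(Y,Y) = 0$. Your write-up is in fact slightly more careful than the paper's, since you flag (and address) the openness hypothesis needed for the relative cup product, a point the paper passes over silently.
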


\begin{proof}
	Suppose by contradiction that there exists some covering, $V_1, \cdots, V_k$ of $Y$ and maps $\sigma_i : V_i \tto X$ with $f(\sigma_i(y)) = y$ and, the cup length is also $k$. So, there exist $\eta_1, \cdots, \eta_k \in \ker(f^\ast)$ such that $\eta_1 \smile \cdots \smile \eta_k \not = 0$.
	
	Then, the below sequence is exact, and the triangle commutes:
	\begin{center}
	\begin{tikzcd}
		\cdots \arrow[r] & H^\ast(Y, V_i) \arrow{r} & H^\ast(Y) \arrow[rd, "f^\ast"] \arrow{r} & H^\ast (V_i) \arrow[r] & \cdots \\
		& & & H^\ast(X) \arrow[u, "\sigma_i^\ast"]
	\end{tikzcd}
	\end{center}
	
	Then, we look at preimages of the $\eta_i$ in $H^\ast (Y, V_i)$. These exist since the sequence is exact; and, the image of $\eta_i$ in $H^\ast (V_i)$ is zero because the triangle commutes. We then choose $v_i \in H^\ast (Y, V_i)$ such that $v_i \mapsto \eta_i$ under the first map.
	
	Then, $v_1 \smile v_2 \smile \cdots \smile v_k \in H^\ast (Y, V_1 \cup V_2 \cup \cdots \cup V_k) = H^\ast (Y,Y) = 0$. But, this is a contradiction, since $v_1 \smile v_2 \smile \cdots \smile v_k \mapsto \eta_1 \smile \eta_2 \smile \cdots \smile \eta_k \not = 0$.
\end{proof}

\section{Computing the kernel of $\pi^\ast$}

We need another lemma to be able to apply Lemma \ref{cup-ln-ker} to our problem.

\begin{lem} \label{compute-ker-pi}
	Let $\pi : \CmD \tto \PmS$ as previously defined. Then, the induced map in cohomology (coefficients in $\ZZ$):
	\[ \pi^\ast : H^i(\PmS, \ZZ) \tto H^i (\CmD, \ZZ) \]
	is trivial for $i > 0$.
\end{lem}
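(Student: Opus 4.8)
The plan is to recognise $\pi$ as the restriction to $\C=\R^2$ of the universal $\mathfrak{S}_d$-cover of $B\mathfrak{S}_d$ and then to compare configuration spaces in $\C$ with those in $\R^\infty$. Write $\CmD$ for the space of ordered $d$-tuples of distinct points of $\C$; the root map identifies $\PmS$ with the quotient $\CmD/\mathfrak{S}_d$, and under this identification $\pi$ is the quotient by the free permutation action of $\mathfrak{S}_d$. A linear inclusion $\R^2=\C\hookrightarrow\R^\infty$ induces an $\mathfrak{S}_d$-equivariant map $\iota\colon\CmD\to F(\R^\infty,d)$ into the space of ordered $d$-tuples of distinct points of $\R^\infty$, hence a commuting square
\begin{center}
\begin{tikzcd}
\CmD \arrow[r,"\iota"] \arrow[d,"\pi"'] & F(\R^\infty,d) \arrow[d] \\
\PmS \arrow[r,"\bar\iota"'] & B\mathfrak{S}_d .
\end{tikzcd}
\end{center}
First I would check that $F(\R^\infty,d)$ is weakly contractible: the Fadell--Neuwirth fibration $F(\R^N,k)\to F(\R^N,k-1)$ has fibre $\R^N$ with $k-1$ points removed, which is $(N-2)$-connected, so $F(\R^N,d)$ is $(N-2)$-connected, and the colimit over $N$ is contractible. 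Since $\mathfrak{S}_d$ acts freely, $F(\R^\infty,d)$ is a model for $E\mathfrak{S}_d$, its quotient is a model for $B\mathfrak{S}_d$, and $\bar\iota$ is a classifying map for the covering $\pi$.

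Applying $H^{\ast}(-;\ZZ)$ to the square and using $H^{i}(F(\R^\infty,d);\ZZ)=0$ for $i>0$, the composite
\[
H^{i}(B\mathfrak{S}_d;\ZZ)\xrightarrow{\bar\iota^{\ast}}H^{i}(\PmS;\ZZ)\xrightarrow{\pi^{\ast}}H^{i}(\CmD;\ZZ)
\]
vanishes for every $i>0$. It therefore suffices to prove that $\bar\iota^{\ast}$ is surjective; since $\ZZ$ is a field this is equivalent to injectivity of the stabilisation map $H_{\ast}(\PmS;\ZZ)\to H_{\ast}(B\mathfrak{S}_d;\ZZ)=H_{\ast}(\mathfrak{S}_d;\ZZ)$. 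This is the crux of the argument: it is a consequence of F.~Cohen's computation of the mod-$2$ homology of configuration spaces and the Dyer--Lashof operation structure, and may be paraphrased as the assertion that every mod-$2$ cohomology class of the braid group $B_d$ is induced from the symmetric group $\mathfrak{S}_d$ along the natural surjection $B_d\twoheadrightarrow\mathfrak{S}_d$. The same input can be phrased through the Serre spectral sequence of the Borel fibration $\CmD\to\PmS\xrightarrow{\bar\iota}B\mathfrak{S}_d$: surjectivity of the base edge homomorphism is equivalent to $E_\infty^{p,q}=0$ for all $q>0$, which in turn forces the fibre edge homomorphism---namely $\pi^{\ast}$ in positive degrees---to vanish.

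The case $i=1$ is the one that can be dispatched by hand: $H^{1}(\PmS;\ZZ)=\Hom(B_d,\ZZ)\cong\ZZ$ is spanned by the parity homomorphism $\sigma_i\mapsto 1$, and its restriction to $\pi_1(\CmD)=P_d$ is trivial, since a pure braid has trivial underlying permutation and hence even exponent sum. The genuine obstacle is the surjectivity of $\bar\iota^{\ast}$ (equivalently the injectivity of the homology stabilisation into $H_{\ast}(\mathfrak{S}_d;\ZZ)$): it encodes real structural information about configuration spaces and is not obtainable from the Universal Coefficient Theorem or a transfer argument alone, since already for $d\ge 4$ the ring $H^{\ast}(\PmS;\ZZ)$ contains classes in degree $\ge 2$ that are not mod-$2$ reductions of integral cohomology classes.
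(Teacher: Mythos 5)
Your argument is essentially the paper's: you kill the composite $H^{i}(B\mathfrak{S}_d;\ZZ)\to H^{i}(\CmD;\ZZ)$ for $i>0$ by factoring it through a contractible model of $E\mathfrak{S}_d$ (your $F(\R^\infty,d)$ plays the role of the universal cover $U_d$ in the paper, where the lift exists because pure braids have trivial underlying permutation), and you then invoke surjectivity of $H^{\ast}(B\mathfrak{S}_d;\ZZ)\to H^{\ast}(\PmS;\ZZ)$, which is exactly the Fuchs result the paper cites (your appeal to F.~Cohen is an equivalent source). So the proposal is correct and takes the same route, differing only in presentation.
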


\begin{proof}
	We need to use some groups, their classifying spaces, and other constructions from them. Let $O(n)$ be the orthogonal group, $BO(n)$ be the corresponding classifying space. Let $\mathfrak{S}_n$ be the symmetric group, and $B\mathfrak{S}_n = K(\mathfrak{S}_n, 1)$ be an Eilenberg-MacLane space. Finally, let $U_n \tto B\mathfrak{S}_n$ be the universal cover.
	
	It is a well-known result that $\PmS$ is an Eilenberg-MacLane space, $K(\pi_1(\PmS), 1)$. It is important to note that $\pi_1(\PmS) = \Bd(d)$, the braid group in $d$ strands. As per Remark \ref{cov-remark}, $\pi$ is a covering map with group $\mathfrak{S}_d$. Then, there's a natural map:
	\[ \pi_1(\PmS) = \Bd(d) \tto \mathfrak{S}_d \]
	This map is realized by taking the starting and ending positions of the strands. There's also a natural $\mathfrak{S}_d$ action on a basis, $e_1, \cdots, e_d \mapsto e_{\sigma(1)}, \cdots, e_{\sigma(d)}$ which realizes a map
	\[ \mathfrak{S}_d \tto O(d) \]
	Ring homomorphisms in cohomology over $\ZZ$ are induced by the group homomorphisms, so the above maps give rise to a map on the cohomologies:
	\[ H^\ast (\PmS) \leftarrow H^\ast (B\mathfrak{S}_d) \leftarrow H^\ast (BO(d)) \]
	According to Fuchs \cite{Fuchs}, the composition, $H^\ast (BO(d), \ZZ) \tto H^\ast (\PmS, \ZZ)$ is surjective.	Moreover, the composition
	\[ \pi_1(\CmD) \tto \pi_1(\PmS) \tto \pi_1(B\mathfrak{S}_d) \simeq \mathfrak{S}_d \]
	is trivial. So, by covering space theory, it can be lifted to the universal cover, as defined earlier. That is, there exists $\varphi$ such that
	\begin{center}
	\begin{tikzcd}
		\CmD \arrow[d, "\pi"] \arrow[r, "\varphi"] & U_d \arrow[d] \\
		\PmS \arrow[r] & B\mathfrak{S}_d
	\end{tikzcd}
	\end{center}
	commutes. Then, since all the cohomologies of $U_d$ are isomorphic to $H^0 (U_d, \ZZ)$, we know that the composition $H^i (B \mathfrak{S}_d, \ZZ) \tto H^i (\CmD, \ZZ)$ is trivial for $i > 0$. Then, the Lemma follows using the result from Fuchs.
\end{proof}

Define
\[ H^{>0} (\PmS, \ZZ) = \sum_{i=1}^{2d} H^i(\PmS, \ZZ) \]
Then, the last result we need is the core result of Fuchs.

\begin{lem} \label{compute-cup-ln}
	The cup length of $H^{>0} (\PmS, \ZZ)$ is greater than $(\log_2(d))^{2/3}$.
\end{lem}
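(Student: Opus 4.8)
The plan is to reduce this to Fuks's mod-$2$ computation of the braid group cohomology \cite{Fuchs}, which is precisely the input being invoked. Since $\PmS$ is a $K(\Bd(d),1)$, the ring in question is $H^{>0}(\Bd(d),\ZZ)$, and we must produce a nonzero product $\eta_1 \smile \cdots \smile \eta_N$ of positive-degree classes with $N > (\log_2 d)^{2/3}$. The one piece of structure already available, from the proof of Lemma \ref{compute-ker-pi}, is that $H^\ast(BO(d),\ZZ) = \ZZ[w_1,\dots,w_d]$ surjects onto $H^\ast(\Bd(d),\ZZ)$; thus $H^\ast(\Bd(d),\ZZ)$ is a finite-dimensional quotient $\ZZ[w_1,\dots,w_d]/\mathcal J$ of a Stiefel--Whitney polynomial algebra. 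In such a quotient any positive-degree class is a polynomial in the $w_i$ with zero constant term, so after expanding and discarding vanishing terms a nonzero product $\eta_1 \smile \cdots \smile \eta_N$ is witnessed by a single surviving monomial; reading off its factors shows that the cup length equals $\max\{\, e_1 + \cdots + e_d \;:\; w_1^{e_1}\cdots w_d^{e_d} \notin \mathcal J \,\}$. So the problem becomes: find a long monomial in the $w_i$ that Fuks's relations do not kill.

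To manufacture such monomials I would use block inclusions. For any $n_1 + \cdots + n_r \le d$ there is a subgroup $\Bd(n_1) \times \cdots \times \Bd(n_r) \hookrightarrow \Bd(d)$ obtained by placing the strands of the $j$-th factor in the $j$-th of $r$ disjoint sub-disks, inducing on cohomology a ring homomorphism $H^\ast(\Bd(d),\ZZ) \tto \bigotimes_{j=1}^r H^\ast(\Bd(n_j),\ZZ)$. By the K\"unneth theorem the target is this tensor product of connected graded $\ZZ$-algebras, in which a product $\zeta_1 \smile \cdots \smile \zeta_r$ of positive-degree classes $\zeta_j \in H^{>0}(\Bd(n_j),\ZZ)$ lying in distinct tensor factors is automatically nonzero. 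Hence it suffices to choose the $n_j$ and classes $\tilde\zeta_j \in H^\ast(\Bd(d),\ZZ)$ such that $\tilde\zeta_j$ restricts, on $\prod_j \Bd(n_j)$, to $\zeta_j$ in the $j$-th tensor factor and to the unit in the others; then $\tilde\zeta_1 \smile \cdots \smile \tilde\zeta_r$ restricts to $\zeta_1 \otimes \cdots \otimes \zeta_r \neq 0$, so $N \ge r$. The natural candidates for the $\zeta_j$ are the classes detected through $H^\ast(B\mathfrak S_{n_j},\ZZ)$, equivalently in the image of the Dickson invariants of an elementary abelian subgroup $(\ZZ)^m \le \mathfrak S_{2^m}$, and it is at exactly this point that Fuks's theorem is genuinely needed: it determines which such classes survive and lift to $\Bd(d)$ with the required independence, and this is what forces the usable block sizes to be distinct powers of two and constrains their admissible degrees.

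What remains is an optimization. If one is allowed one class from each of $r$ blocks of distinct sizes $2^{m_1} < \cdots < 2^{m_r}$, then $\sum_j 2^{m_j} \le d$, and naively $r$ could be as large as $\approx \log_2 d$; the constraints coming from Fuks's relations --- on which degrees actually lift and on how the lifts can simultaneously be made block-diagonal --- trade the number $r$ of usable blocks against the budget $d$, and pushing this optimization through yields the stated bound $N > (\log_2 d)^{2/3}$. The main obstacle is therefore concentrated in \cite{Fuchs} and has two parts: the explicit determination of the ideal $\mathcal J$ (the vanishing and multiplicative relations among the Stiefel--Whitney classes $w_i$ in $H^\ast(\Bd(d),\ZZ)$), and the combinatorial optimization over block decompositions that extracts the exponent $2/3$ from those relations. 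Everything else above is formal once Fuks's description of the ring is in hand.
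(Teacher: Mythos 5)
There is a genuine gap: the entire quantitative content of the lemma --- the exponent $2/3$ --- is exactly the step you do not carry out. Your detection scheme (block inclusions $\Bd(n_1)\times\cdots\times\Bd(n_r)\hookrightarrow \Bd(d)$, K\"unneth, classes pulled back through $H^\ast(B\mathfrak S_{n_j},\ZZ)$) is a reasonable picture of \emph{why} Fuks's classes multiply nontrivially, but at the end you write that the relations ``trade the number $r$ of usable blocks against the budget $d$, and pushing this optimization through yields the stated bound,'' and that the main obstacle is ``concentrated in \cite{Fuchs}.'' That optimization is precisely what has to be exhibited, and nothing in your argument produces the number $(\log_2 d)^{2/3}$ rather than, say, $\log_2 d$ or $(\log_2 d)^{1/2}$; indeed your naive block count suggests $r\approx\log_2 d$ and you give no mechanism that cuts it to the claimed exponent. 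A bound of the form ``greater than $(\log_2 d)^{2/3}$'' cannot be concluded from a deferred optimization.

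What the paper actually uses from Fuks is an explicit presentation: generators $g_{m,k}$ ($m\ge 1$, $k\ge 0$) of $H^{>0}(\PmS,\ZZ)$ with $\deg g_{m,k}=2^{m+k-1}$, subject to $g_{m,k}^2=0$, and with a product $g_{m_1,k_1}\cdots g_{m_n,k_n}$ of distinct generators surviving as long as $\sum_i (m_i+k_i)\le \log_2 d$. From this the count is concrete: take \emph{all} distinct pairs $(m,k)$ with $m+k\le N$; there are about $N(N+1)/2$ of them, and their total cost $\sum_i(m_i+k_i)$ is on the order of $N^3$ (the paper bounds it by $\frac{M(M+1)(2M+1)}{6}$). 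Choosing $N\approx(\log_2 d)^{1/3}$ keeps the cost within the budget $\log_2 d$ and yields $n\approx(\log_2 d)^{2/3}$ factors with nonzero product --- this is where $2/3$ comes from (cost grows like $N^3$ while the number of classes grows like $N^2$). To repair your write-up you would either import this presentation and redo this counting, or make your block-decomposition bookkeeping explicit enough (degrees of the chosen $\zeta_j$, the constraint $\sum_j 2^{m_j}\le d$, and which classes actually lift) that the same cubic-versus-quadratic trade-off appears; as written, the proposal establishes only that \emph{some} long products exist, not a lower bound of $(\log_2 d)^{2/3}$ on the cup length.
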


\begin{proof}
	Fuchs \cite{Fuchs} proved that the generators of $H^{>0} (\PmS, \ZZ)$ are $g_{m,k}$ with $k= 0,1,2,\cdots$ and $m=1,2,3,\cdots$ with the degree of $g_{m,k}$ equal to $2^k \cdot 2^{m-1}$, and the following relations: 1) $g_{m,k}^2 = 0$ and otherwise, 2) $g_{m_1, k_1} \cdot g_{m_2, k_2} \cdots g_{m_n, k_n} = 0$ only if $2^{m_1 + m_2 + \cdots + m_n+ k_1 + k_2 + \cdots + k_n} > d$. 
	
	Taking the $\log_2$ of both sides shows that we are looking for pairs $(m_i, k_i)$ with $i=1, \cdots, n$ such that $n$ is as large as possible, and $\sum m_i + \sum k_i \leq \log_2 (d)$. To count these, we can first count the number of pairs $(m_i, k_i)$ that satisfy $m_i + k_i \leq N$ for some fixed $N$. A straightforward counting, shows that there are $\sum_1^N i = \frac{N (N+1)}{2}$ of these pairs.
	
	It is also straightforward that $\sum m_i + \sum k_i \leq \log_2 (d)$ will be satisfied provided
	\[ \sum_1^M i^2 = \frac{M(M+1)(2M+1)}{6} \leq \log_2(d) \]
	
	Finally, we check that $n = (\log_2(d))^{2/3}$ satisfies these conditions. Thus, the proof is complete.
\end{proof}

\section{Conclusion}

The proof of Theorem \ref{main-thm} now follows:

\begin{adjustwidth*}{0em}{2.5em}
	Topological complexity of Poly($d$) $+ 1$
	
	\begin{tabular}{p{7cm}|p{3cm}}
		$=$ Number of leaves in Poly($d$) & Definition \\
		$\geq$ Covering number of $\pi: \CmD \tto \PmS$ & Theorem \ref{thm-connection} \\
		$>$ Cup length of $\ker(\pi^\ast)$ & Lemma \ref{cup-ln-ker} \\
		$=$ Cup length of $H^{>0} (\PmS, \ZZ)$ & Lemma \ref{compute-ker-pi} \\
		$\geq (\log_2(d))^{2/3}$ & Lemma \ref{compute-cup-ln} 
	\end{tabular}
\end{adjustwidth*}

\section{Low-complexity algorithms via Newton's Method on the complex plane} \label{section-newtons-method}

Theorem \ref{main-thm} gives rise to an important problem. One wishes to find algorithms for computing the roots of polynomials with minimal topological complexity. Since a lower bound is given on the problem, it serves as the target complexity for an algorithm.

Moreover, finding such an algorithm gives an upper bound for the algorithm. We presently explore two methods of approximating roots. The first, discussed in this section, approximates roots of a polynomial via an extension of Newton's Method to the complex plane.


\subsection{Quadratic polynomials}

Given $f(t) = t^2 + a_1 t + a_0$, one can find its roots by applying the quadratic formula.
\[ t = \frac{-a_1 \pm \sqrt{\Omega}}{2} \]
In the above formula, $\Omega$ is the discriminant, $a_1^2 -4 a_0$. So, the problem essentially reduces to finding the square root of a complex number. This can be done if we restrict $\sqrt{\cdot} : \R \tto \R$ by applying Newton's method. We presently extend Newton's method to the complex numbers with a computation-theoretic approach.

Given a complex number $S$, we can approximate the solution to the equation $x^2 - S = 0$ by starting with a ``seed'', $x_0$, and then defining the sequence:
\[ x_{n+1} = x_n - \frac{x_n^2 - S}{2 x_n} = x_n - \frac{(x^2 - S)|_{x=x_n}}{\ddx (x^2 - S) |_{x=x_n}} \]
If the seed is chosen correctly, this sequence converges to $\sqrt{S}$. So, the only branching occurs in choosing the seed.

We created a program in Python to illustrate the duration of convergence when we start with $x_0 = 1$, and graph $S$ on the complex plane:

\begin{figure}[H]
\includegraphics[scale=0.5]{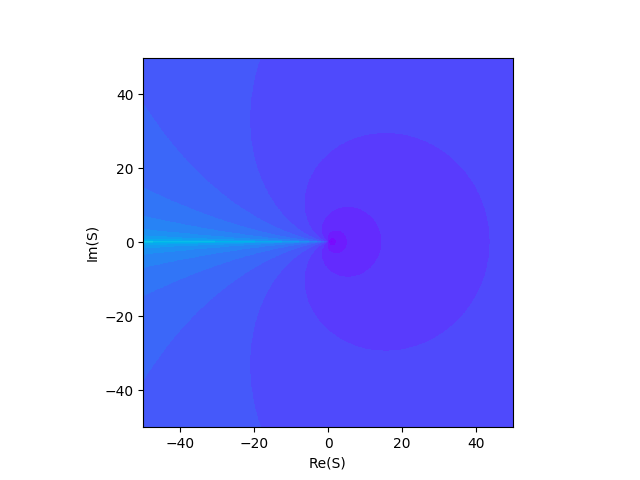}
\caption{The duration for convergence of $x_n \tto \sqrt{S}$ within a radius of $0.1$, seeded by the number $1$. Purple indicates rapid convergence, and light blue indicates divergence.}
\end{figure}

\begin{defn}
	A \textbf{fractal diagram for $t^d - S$ seeded by $k$ to threshold $r$} is a diagram (as above) showing the duration of convergence for $x_n \tto \sqrt[d]{S}$ within a radius of $r$, seeded by the number $k$ where purple indicates rapid convergence and light blue indicates divergence. The sequence $x_n$ is given by Newton's method extended to the complex plane as defined at the beginning of this section.
\end{defn}

There is clearly a problem with using this seed for $\Re(S) < 0$, especially on the negative $x$-axis. So, we can compute the same diagram for $x_0 = i$.

\begin{figure}[H]
\includegraphics[scale=0.5]{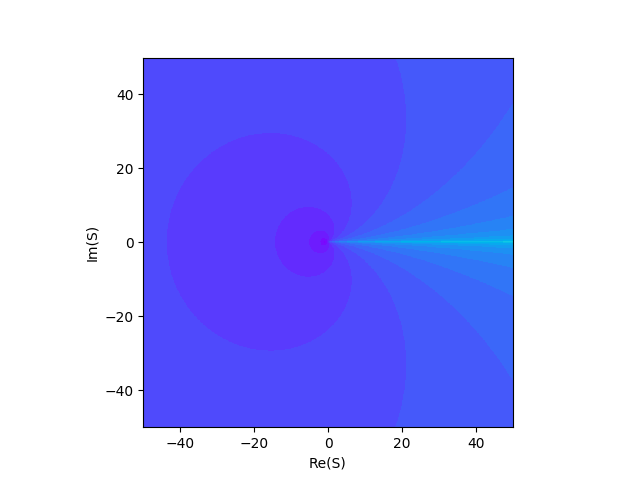}
\caption{Fractal diagram for $t^2 - S$ seeded by $i$ to threshold $0.1$.}
\end{figure}

We have therefore identified the branch in an efficient algorithm for Poly($d$): 

\begin{adjustwidth*}{2.5em}{2.5em}
$f(t) = t^2 + a_1 t + a_0$ \\
Define $\Omega = a_1^2 - 4 a_0$. \\
If $\Re(\Omega) < 0$: Perform Newton's Method to compute $\sqrt{\Omega}$ with seed $i$. \\
Otherwise: Perform Newton's Method to compute $\sqrt{\Omega}$ with seed $1$. \\
\textbf{Return} $t = \frac{1}{2}(-a_1 + \sqrt{\Omega}), \frac{1}{2}(-a_1 - \sqrt{\Omega})$.
\end{adjustwidth*}

This algorithm, has topological complexity $1$. Indeed, $(\log_2(2))^{2/3} -1 = 0$, so the number of branches is strictly greater than $0$, and this algorithm is therefore minimal.

\begin{rem}
	This algorithm does not violate Smale's no-loop condition because we can guarantee that for $S \in B_K$, Newton's method terminates in less than $N$ steps for some large $N$.
\end{rem}

\subsection{Cubic polynomials} \label{section-cubic-poly}
We similarly make use of the cubic formula, which requires computing the square root and the cube root. Specifically, the formula requires computing one square root, and two cube roots.

We can similarly compute the cube root of a number, $S$, by computing the sequence:
\[ x_{n+1} = x_n - \frac{x_n^3 - S}{3 x_n^2} \]
If $x_0$ is chosen properly, this sequence similarly converges to $\sqrt[3]{S}$. Using the same program, we created a similar graph as shown for the $\sqrt{\cdot}$ case.

\begin{figure}[H] \label{fig-3-seed-1}
\includegraphics[scale=0.5]{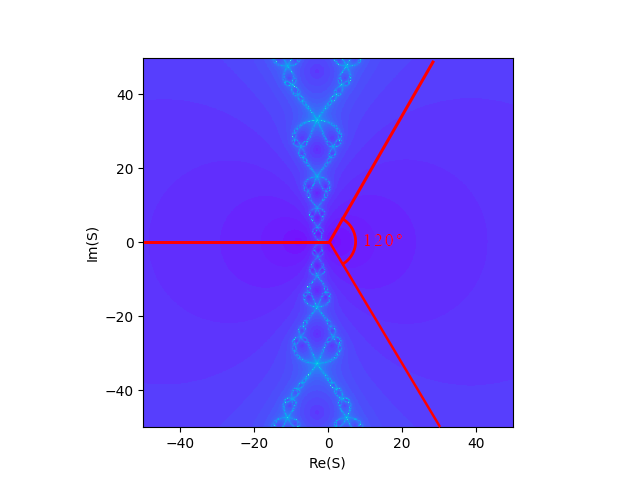}
\caption{Fractal diagram for $t^3 -S$ seeded by $1$ to threshold $0.1$.}
\end{figure}

As noted in the diagram, the algorithm, seeded by $1$, converges rapidly for $S$ in a specific third of the coordinate plane. We can modify the seed so that the algorithm converges in different sections.

\begin{figure}[H] \label{fig-3-seed-2pi-9}
\includegraphics[scale=0.5]{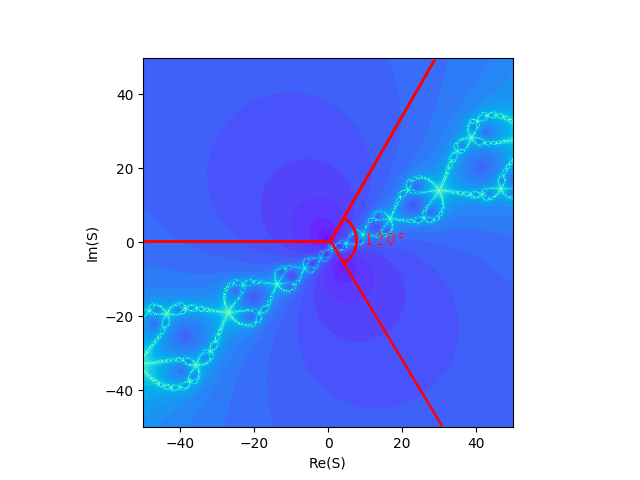}
\caption{Fractal diagram for $t^3 - S$ seeded by $e^{\frac{2 \pi}{9} i} \approx 0.77 + 0.64i$ to threshold $0.1$.}
\end{figure}

\begin{figure}[H] \label{fig-3-seed-4pi-9}
\includegraphics[scale=0.5]{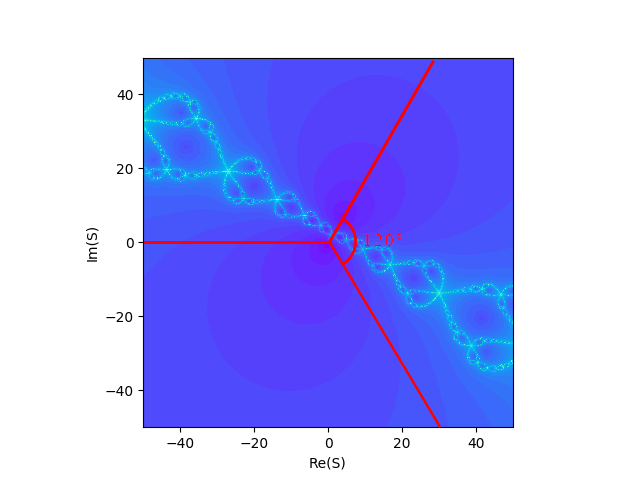}
\caption{Fractal diagram for $t^3 - S$ seeded by $e^{\frac{4 \pi}{9} i} \approx 0.17 + 0.98i$ to threshold $0.1$.}
\end{figure}

Therefore, we have cut the complex plane into three pieces, and identified a seed for each such that the algorithm converges rapidly in each when seeded appropriately. So, the corresponding algorithmic tree contains two branches.

Altogether, finding the roots of a general cubic equation requires at most five branches.

\subsection{Quartic polynomials} We can continue the approach of using closed form expressions for the roots of degree $d$ polynomials only up to $d=4$. In fact, for the quartic case, we must add additional branches, because there is a risk of dividing by zero, which does not occur in the other expressions.

Let $f(t) = t^4 + a_3 t^3 + a_2 t^2 + a_1 t + a_0$. We need to first perform several computations:
\begin{align*}
	p &= \frac{8a_2 - 3 a_3^2}{8} \\
	q &= \frac{a_3^3 - 4 a_3 a_2 + 8 a_1}{8} \\
	\Omega_0 &= a_2^3 - 3 a_3 a_1 + 12 a_0 \\
	\Omega_1 &= 2 a_2^3 - 9 a_3 a_2 a_1 + 27 a_3^2 a_0 + 27 a_1^2 - 72 a_2 a_0
\end{align*}
Thus far we have used no branches. Then define
\[ Q = \sqrt[3]{ \frac{\Omega_1 + \sqrt{\Omega_1^2 - 4 \Omega_0^3}}{2} } \]
using 3 branches. We would then like to define
\[ S = \frac{1}{2} \sqrt{-\frac{2}{3} p + \frac{1}{3} \left( Q + \frac{\Omega_0}{Q} \right)} \]
and ideally use just one more branch. But, we need to handle the case that $Q = 0$ with a separate branch. If $Q = 0$, though, at least three of the roots are equal, and the roots are rational functions in the coefficients. Thus far, then, we have used five branches.

If $Q \not = 0$, then
\begin{align}
	t_{1,2} &= -\frac{b}{4} - S \pm \sqrt{-4S^2 - 2p + \frac{q}{S}} \\
	t_{3,4} &= -\frac{b}{4} - S \pm \sqrt{-4S^2 - 2p - \frac{q}{S}}
\end{align}
using two more branches. Altogether, solving a quartic requires at least seven branches.

\subsection{$t^d - S$} The approach to solving general quadratic, cubic, and quartic equations was enabled entirely by computing square and cube roots. In general,

\begin{thm}
	The class of degree $d$, complex, monic polynomials which can be expressed as $t^d - S$ can be solved in $d$ branches.
\end{thm}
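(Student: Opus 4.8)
The plan is to produce an algorithm, in Smale's model, for the restricted problem Poly on the family $\{t^d - S : S \in \C\}$ that uses at most $d$ decision nodes. The first observation is a reduction to a single root: the roots of $t^d - S$ are exactly $\rho, \omega\rho, \omega^2\rho, \dots, \omega^{d-1}\rho$ where $\omega = e^{2\pi i/d}$ and $\rho$ is any $d$-th root of $S$. Once one such $\rho$ is (approximately) in hand, the remaining $d-1$ roots are obtained by multiplying by the \emph{constants} $\omega^k$, which are computation nodes and contribute no branching (they only scale the error by $1$). So it suffices to approximate one $d$-th root of $S$ to within $\epsilon$ using at most $d$ branches.

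To approximate a $d$-th root I would use Newton's method for $x^d - S$ as in Section \ref{section-newtons-method}, i.e. the map $\nu_S(x) = x - (x^d - S)/(d x^{d-1})$, iterated a \emph{fixed} number $N = N(d,K,\epsilon)$ of times; each iterate is a computation node, and running a fixed number of steps rather than looping until convergence is exactly what keeps us inside the no-loop model. The choice of seed and the uniform bound $N$ are governed by the scaling conjugacy $\nu_S(x) = S^{1/d}\,\nu_1(S^{-1/d}x)$, hence $\nu_S^{\,n}(x) = S^{1/d}\,\nu_1^{\,n}(S^{-1/d}x)$: the Newton dynamics of $x^d - S$ is that of $x^d - 1$ conjugated by multiplication by $S^{1/d}$, so the immediate basin of the root $S^{1/d}\zeta$ of $x^d - S$ is $S^{1/d}$ times the immediate basin of the root $\zeta$ of $x^d - 1$.

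Now partition the inputs into the near-origin region $\{|S| \le \epsilon^d\}$ and $d$ angular sectors $A_k = \{S \ne 0 : \arg S \in [2\pi k/d,\ 2\pi(k+1)/d)\}$ intersected with $\epsilon^d \le |S| \le K$. On $\{|S| \le \epsilon^d\}$ every $d$-th root of $S$ lies within $\epsilon$ of $0$, so the algorithm just returns $(0,\dots,0)$; detecting this region is one decision node, the sign of the polynomial $\epsilon^{2d} - \Re(S)^2 - \Im(S)^2$. For each $A_k$ I would fix a seed $s_k$ of large modulus with $\arg s_k$ chosen so that, as $S$ ranges over $A_k$, the rescaled point $S^{-1/d}s_k$ stays in a thin annular sector $\{\rho_1 \le |z| \le \rho_2,\ |\arg z| \le \pi/d^2\}$ about the positive real axis (the angular half-width $\pi/d^2$ is forced: $\arg(S^{1/d})$ only moves by $2\pi/d^2$ over $A_k$, and we center it). One then checks this annular sector lies in the immediate Newton basin of the root $1$ of $x^d - 1$, which gives a uniform $N$ by compactness. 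Finally, sorting $S$ among the $d$ sectors is a binary decision tree with $d$ leaves, hence $d-1$ decision nodes; together with the one near-origin test this is exactly $d$ branches, proving the bound.

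The main obstacle is the quantitative step: showing that $\{\rho_1 \le |z| \le \rho_2,\ |\arg z| \le \pi/d^2\}$ — a compact subset of $\C \setminus\{0\}$ whose radial ratio $\rho_2/\rho_1 = K^{1/d}/\epsilon$ is dictated by the data — really sits inside the immediate basin of $1$. That the positive real axis is in the basin is easy (on $(0,\infty)$ Newton's method for the convex increasing function $x^d - 1$ converges monotonically to $1$), but this does not by itself control the transversal thickness of the basin, and one genuinely needs $\pi/d^2$ to be within that thickness. I would obtain this from a direct estimate on the argument of the Newton iterates, using $\nu_1(z)\approx\frac{d-1}{d}z$ for $|z|$ large, $\nu_1(z)\approx\frac{1}{dz^{d-1}}$ for $|z|$ small, and the superattraction $\nu_1'(1)=0$ near the root, to see that an orbit starting in a sufficiently thin sector about the real axis never leaves a slightly wider sector and is drawn toward $1$. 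Everything else — the root-multiplication reduction, the fixed-step (no-loop) implementation, and the counting of decision nodes — is routine.
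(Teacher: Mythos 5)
Your proposal follows essentially the same route as the paper, which for this theorem explicitly declines to give a rigorous proof and rests instead on two empirical observations about the fractal diagrams: that a single seed yields rapid convergence only on one of $d$ sectors of the $S$-plane, and that replacing the seed $1$ by $e^{2\pi i k/d^2}$ rotates the diagram by $2\pi k/d$. Your scaling conjugacy $\nu_S(x)=S^{1/d}\,\nu_1(S^{-1/d}x)$ is precisely the rigorous form of the second observation, and your sector decomposition with rotated (re-centered) seeds is the first; the reduction to a single root via multiplication by roots of unity is the paper's own remark. Beyond that you are more careful than the paper: you implement the no-loop condition by a fixed iteration count $N$ obtained from compactness, you count decision nodes explicitly, and you add the near-origin test $|S|\le\epsilon^d$, which incidentally accounts for the bound being $d$ rather than the $d-1$ suggested by the paper's cubic discussion. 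The one gap you flag --- that an annular sector of angular half-width $\pi/d^2$ about the positive real axis, with radial ratio $K^{1/d}/\epsilon$, lies in the immediate Newton basin of the root $1$ of $x^d-1$ --- is exactly the point the paper replaces by computed pictures, so your sketch is, if anything, more complete than the paper's treatment; supplying that quantitative basin estimate (or citing the known structure of Newton basins for $z^d-1$) is what would turn it into a proof. One small detail to add in a write-up: check that each of the $d-1$ sector-sorting decisions can be realized by a single admissible inequality test (e.g.\ signs of $\Im(e^{-i\theta}S)$ along boundary rays, peeled off in an order for which each leaf region is genuinely cut out by the tests on its path), since the model allows only one inequality per decision node.
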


While a rigorous proof of this theorem is not given, we hope to provide significant insight into this result.

Firstly, the fractal diagram for $t^d - S$ seeded by $k$ to threshold $r$ contains $d-1$ ``branches'' for sufficiently small $r$. A ``branch'' on a fractal diagram refers to the visual branches evident in the diagram starting at a point near the origin and extending to infinity, growing in size. So, for a given seed, we can only guarantee convergence for polynomials in one of $d$ sections of the plane. This breakdown is done explicitly in Section \ref{section-cubic-poly}. Another example is given below.

\begin{figure}[H]
\includegraphics[scale=0.5]{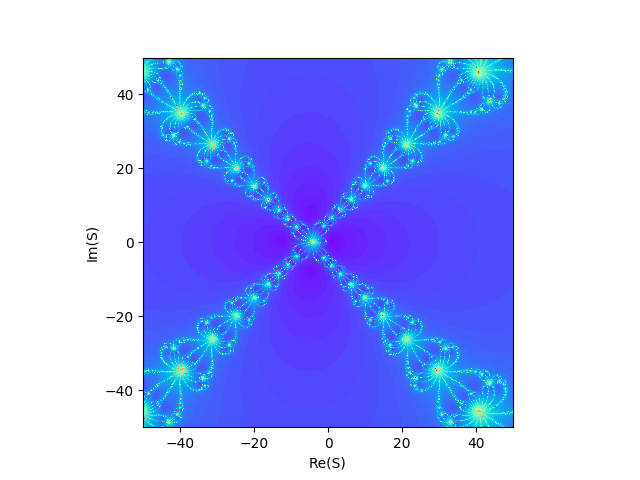}
\caption{Fractal diagram for $t^5 - S$ seeded by $1$ to threshold $0.1$.}
\end{figure}

Secondly, the fractal diagram for $t^d - S$ seeded by $e^{\frac{2 \pi}{d^2} i k}$ ($k = 0,1,2, \cdots, d-1$) is exactly the fractal diagram for $t^d - S$ seeded by $1$ rotated clockwise by $\frac{2 \pi k}{d}$ radians.

These two observations give a concrete algorithm to compute the roots of $t^d - S$ in $d$ branches with the branching based on the location of $S$ in the complex plane.

\begin{rem}
	Technically, the algorithm only gives one root of $t^d - S$, but the other roots are given by the roots of unity times the root the algorithm gives.
\end{rem}

\section{Low-complexity algorithms via Power Iteration} \label{section-power-iteration}
Given $f(t) \in B_K$ with $\deg(f) = d$ in coefficients $a_{d-1}, \cdots, a_0$, we can construct
\[ F = \begin{pmatrix}
	0 & 0 & \cdots & 0 & -a_0 \\
	1 & 0 & \cdots & 0 & -a_1 \\
	0 & 1 & \cdots & 0 & -a_2 \\
	\vdots & \vdots & \ddots & \vdots & \vdots \\
	0 & 0 & \cdots & 1 & -a_{d-1}
\end{pmatrix} \]
the companion matrix of $f(t)$. The characteristic polynomial of $F$ is $f$. Therefore, approximating the roots of the polynomial can be done by approximating the eigenvalues of $F$. One such algorithm to do this is called power iteration (see Bindel \cite{Bindel} for a more extensive definition).

The power iteration algorithm (almost always) returns an eigenvalue of a given ($n \times n$) matrix $A$. It proceeds as follows:

\circled{1} Choose a ``seed'' vector, $b_0$. In general, this vector is chosen randomly. For our specific case, with $F$ defined as above, the only vector we need to use is $(0, 0, \cdots, 0, 1)$. See Remark \ref{rem-last-basis-works} after the proof for an explanation as to why.

\circled{2} Compute the values of the sequence $b_n = \frac{A b_{n-1}}{\lVert A b_{n-1} \rVert}$.

``In general'', $b_n \tto v$ where $A v = \lambda v$. Formally:

\begin{thm}
	If $A$ is an $n \times n$ matrix, with eigenvalues $|\lambda_1| > |\lambda_2| > |\lambda_3| > \cdots > |\lambda_k| > 0$, then power iteration converges to a vector $v$ with $Av = \lambda_1 v$, unless $b_0$ is orthogonal to $v$.
\end{thm}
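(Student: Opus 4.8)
The plan is to diagonalize $A$ (or, failing that, pass to its Jordan form) and track how repeated application of $A$ amplifies the component of $b_0$ along the dominant eigenvector. First I would write $b_0 = \sum_{j=1}^{n} c_j v_j$, where $v_1, \dots, v_n$ is a basis of eigenvectors (the generic case; I would remark on the non-diagonalizable case at the end). Applying $A^k$ gives $A^k b_0 = \sum_j c_j \lambda_j^k v_j = \lambda_1^k \bigl( c_1 v_1 + \sum_{j \geq 2} c_j (\lambda_j / \lambda_1)^k v_j \bigr)$. Since $|\lambda_j / \lambda_1| < 1$ for all $j \geq 2$ by the strict inequality hypothesis, every term in the sum tends to $0$ as $k \to \infty$, so $A^k b_0 / \lambda_1^k \to c_1 v_1$.

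Next I would observe that the normalization in step \circled{2} makes $b_k = A^k b_0 / \lVert A^k b_0 \rVert$, so the scalar factor $\lambda_1^k$ (and any growth rate) is divided out: $b_k = \frac{A^k b_0 / \lambda_1^k}{\lVert A^k b_0 / \lambda_1^k \rVert}$, and as $k \to \infty$ the numerator converges to $c_1 v_1$ and the denominator to $\lVert c_1 v_1 \rVert$, so $b_k \to \frac{c_1 v_1}{\lVert c_1 v_1 \rVert}$, a unit vector in the $\lambda_1$-eigenspace — provided $c_1 \neq 0$. The condition $c_1 \neq 0$ is exactly the statement that $b_0$ is not orthogonal to $v_1$ (more precisely, not in the span of the other eigenvectors); this is where the hypothesis "unless $b_0$ is orthogonal to $v$" enters. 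One subtlety to address: if $\lambda_1$ is negative or complex, $\lambda_1^k$ oscillates in sign/phase, so $b_k$ need not converge on the nose but converges up to a unimodular scalar — I would either phrase the conclusion as convergence in projective space / convergence of the associated line, or simply note $b_k$ converges to an eigenvector after adjusting by the phase of $\lambda_1^k$, which suffices since the goal is to recover $\lambda_1$ via the Rayleigh quotient $b_k^{*} A b_k \to \lambda_1$.

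Finally I would record the recovery of the eigenvalue itself: once $b_k$ is (projectively) close to $v$, the Rayleigh quotient $\frac{b_k^{*} A b_k}{b_k^{*} b_k} \to \lambda_1$, which is what the algorithm actually outputs. For the non-diagonalizable case I would replace the eigenbasis with a Jordan basis: the dominant Jordan block contributes a term growing like $k^{m-1}\lambda_1^k$ (with $m$ the block size), which still strictly dominates every other block since $|\lambda_j| < |\lambda_1|$ beats any polynomial factor, so after normalization $b_k$ still converges to the top vector of that block, which is a genuine $\lambda_1$-eigenvector.

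The main obstacle I expect is not the limit computation — that is routine geometric-series estimation — but stating the conclusion correctly when $\lambda_1 \notin \mathbb{R}_{>0}$: literal convergence of $b_k$ fails, and the cleanest fix is to phrase everything in terms of the line spanned by $b_k$ (convergence in $\mathbb{CP}^{n-1}$) or in terms of the Rayleigh quotient, then note that for the application to companion matrices with $\lambda_1$ the largest-modulus root this is exactly the information needed. A secondary point requiring care is making the "unless orthogonal" hypothesis precise: in the diagonalizable case it should read "unless $c_1 = 0$," i.e. $b_0$ lies in the span of the non-dominant eigenvectors, which for a symmetric/normal $A$ coincides with orthogonality to $v$ but in general is the weaker condition one should state.
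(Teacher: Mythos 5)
This is essentially the paper's proof: expand $b_0$ in an eigenbasis (Jordan basis in general), apply $A^k$, and observe that after normalization the component along the dominant eigenvector wins; the paper simply runs the whole computation in Jordan form $A = VJV^{-1}$ from the start rather than treating the diagonalizable case first. If anything, your write-up is more careful than the paper's on exactly the points it glosses over --- the phase factor $(\lambda_1/|\lambda_1|)^k$ means $b_k$ converges only projectively (up to a unimodular scalar) when $\lambda_1$ is not a positive real, a dominant Jordan block of size $m>1$ contributes polynomial growth so the paper's claim that $(\lambda_1^{-1}J)^k$ converges entrywise is not quite right as stated, and the correct non-degeneracy hypothesis is $c_1 \neq 0$ (i.e.\ $b_0$ not in the span of the other generalized eigenvectors), which coincides with ``$b_0$ orthogonal to $v$'' only when $A$ is normal.
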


\begin{proof}
	We can write $A$ in Jordan canonical form, ordered such that:
	\[ J = \begin{pmatrix}
		J_1 & & &  \\
		& J_2 & & \\
		& & \ddots & \\
		& & & J_k
	\end{pmatrix} \]
	where $J_i$ is the Jordan block corresponding to $\lambda_i$. Written this way, $A = V J V^{-1}$. Then, we can write $b_0$ in the columns of $V$. Write $b_0 = \alpha_1 v_1 + \cdots + \alpha_n v_n$.
	
	Assume that $\alpha_1 \not = 0$, so $b_0$ has a nonzero component in the direction of $v_1$.
	
	Then, compute:
	\begin{align*}
		b_k &= \frac{A^k b_0}{\lVert A^k b_0 \rVert} \\
			&= \frac{(V J V^{-1})^k b_0}{\lVert (V J V^{-1})^k b_0 \rVert} \\
			&= \frac{V J^k V^{-1} (\alpha_1 v_1 + \cdots + \alpha_n v_n)}{\lVert V J^k V^{-1} (\alpha_1 v_1 + \cdots + \alpha_n v_n) \rVert} \\
			&= \left(\frac{\lambda_1}{| \lambda_1 |}\right)^k \frac{\alpha_1}{| \alpha_1 |} \frac{v_1 + \frac{1}{\alpha_1} V \left( \frac{1}{\lambda_1} J \right)^k (\alpha_2 e_2 + \cdots + \alpha_n e_n)}{\lVert v_1 + \frac{1}{\alpha_1} V \left( \frac{1}{\lambda_1} J \right)^k (\alpha_2 e_2 + \cdots + \alpha_n e_n) \rVert}
	\end{align*}
	And since $\lambda_1$ is dominant,
	\[ (\frac{1}{\lambda_1} J)^k \tto \begin{pmatrix}
		1 & 0 & \cdots & 0 \\
		0 & 0 & \cdots & 0 \\
		\vdots & \vdots & \ddots & \vdots \\
		0 & 0 & \cdots & 0
	\end{pmatrix} \]
	so, written as above, much of the fraction in our final expression for $b_k$ dies off as $k \tto \infty$. It follows that $b_k \tto \frac{v_1}{\lVert v_1 \rVert}$.
\end{proof}

\begin{rem} \label{rem-last-basis-works}
	If $|\lambda_1| > |\lambda_2| > |\lambda_3| > \cdots > |\lambda_k| > 0$, then power iteration always converges to the eigenvector corresponding to $\lambda_1$, unless $b_0$ is orthogonal to the eigenvector. Computationally, one chooses $b_0$ randomly to give a high likelihood that $b_0$ is not orthogonal to the eigenvector. 
	
	In a basis, at least one vector is not orthogonal to the eigenvector. So, if we perform power iteration on the entire canonical basis, at least one is guaranteed to converge. For $F$ above, it is clear that the vector $b_0 = (0, 0, \cdots, 0, 1)$ is sufficient to guarantee convergence, because all other basis vectors turn into that vector at some point during the algorithm ($F e_i = e_{i+1}$ for $i < n$).
\end{rem}

\begin{rem} \label{rem-geom-conv}
	Power iteration converges geometrically, with ratio $| \frac{\lambda_2}{\lambda_1} |$.
\end{rem}

Notice that Remark \ref{rem-geom-conv} is especially problematic. It means that we cannot guarantee that power iteration converges quickly, which is important, since Poly($d$) excludes programs with loops in them. 

Earlier in the paper (e.g., Section \ref{section-newtons-method}), we have made the notion of ``quick'' convergence (i.e. no loops in the program) a mathematical statement. We would like to guarantee that there is a universal $N$ such that after $N$ computations, the algorithm will have arrived within $\epsilon$ of the correct answer. For power iteration, though, we must make a different statement.

We can conclude from Remark \ref{rem-geom-conv} that there exists some sufficiently large $N = N(\delta)$ such that for all $1 > \delta >0$, we can construct a program, based on power iteration which can find all the roots of $f$ with \textit{no} branches, many iterations, and $b_0 = (0, 0, \cdots, 0, 1)$, terminating in $N$ steps, which finds the roots to within $\epsilon$ with \textit{probability} greater than $\delta$.

\begin{rem}
	The challenge is only to find a single root of $f$. Then, we can divide $f$ by $t - \lambda$, and repeat the procedure.
\end{rem}

The careful reader will notice that above, we exclude the case where $|\lambda_1| = |\lambda_2|$. This case is problematic.

\subsection{Equal-magnitude dominant and subdominant eigenvalues}

If $|\lambda_1| = |\lambda_2|$, but $\lambda_1 \not= \lambda_2$ (i.e. $\lambda_1 = r e^{\theta_1 i}$ and $\lambda_2 = r e^{\theta_2 i}$ with $\theta_1 \not = \theta_2$), then power iteration does not converge to a single vector. Instead, it eventually approaches the sequence
\[ e^{n \theta_1 i} + e^{n \theta_2 i}\]
In this case the goal is to isolate $\theta_1$ and $\theta_2$ from the values of the sequence. We suspect this is difficult, and cannot be done without using many branches.

\subsection*{Acknowledgments}  I would like to thank my mentor, Weinan Lin, for teaching me much of the material presented here, and Shmuel Weinberger for giving me this topic and directing me appropriately. I would like to especially thank J. Peter May for organizing the REU at the University of Chicago, which I attended during the summer of 2017.

\end{document}